\documentclass[11pt,reqno]{amsart}
\usepackage{graphicx} 

\title{On a Multispecies Electrodiffusion Model in Higher-Dimensional Porous Media}

\author{Elie Abdo}
\address{	Department of Mathematics \\
     American University of Beirut \\
	Beirut 1107-2020\\Lebanon.} \email{ea94@aub.edu.lb}

    \author{Christofer Ghazale}
\address{	Department of Mathematics \\
     American University of Beirut \\
	Beirut 1107-2020\\Lebanon.} \email{cmg10@mail.aub.edu}
\usepackage{xfrac}

\usepackage[margin=1in]{geometry}
\usepackage{amsmath, amsthm, amssymb, yhmath, xfrac, mathtools}
\usepackage[most]{tcolorbox}
\usepackage[normalem]{ulem}

\usepackage{times}
\usepackage{color}
\usepackage{hyperref}

\newcommand{\la}{\label}

\newcommand{\na}{\nabla}
\newcommand{\be}{\begin{equation}}
\newcommand{\ee}{\end{equation}}
\newcommand{\ba}{\begin{array}{l}}
\newcommand{\ea}{\end{array}}

\newtheorem{thm}{Theorem}[section]
\newtheorem{prop}[thm]{Proposition}

\newtheorem{rem}[thm]{Remark}

\usepackage{cite}

\usepackage{MnSymbol,wasysym}
\usepackage{accents}

\newcommand{\R}{\mathbb R}

\date{\today}
\begin{document}

\begin{abstract}
We consider the Nernst--Planck--Darcy system describing the
electrodiffusion of $N$ ionic species with arbitrary valences and
different diffusivities, transported by a Darcy flow driven by the
electric force, in the whole space $\mathbb{R}^d$, $d \ge 3$. We prove
the existence of global solutions for initial ionic concentrations
that are small in the critical Lebesgue space
$L^{d/2}(\mathbb{R}^d)$.  Moreover, we establish
uniqueness for initial data in $L^{d+\epsilon}(\mathbb{R}^d)$ when
$3 \le d \le 4$ and in $L^{2d-1}(\mathbb{R}^d)$ when $d \ge 5$,
improving on results in the literature, which require Sobolev
regularity of the data. The proofs rely on energy estimates exploiting
cancellations of the highest-order nonlinear terms, and on the
resulting dissipative structures, which yield an instantaneous gain of
regularity used to control the doubly nonlinear velocity term in the
uniqueness argument.
\end{abstract}

\keywords{Nernst–Planck–Darcy system; electrodiffusion; global existence; uniqueness; critical spaces; dissipative structure.}

\noindent\thanks{\em{MSC Classification: 35Q35, 35A01, 35A02}}

\maketitle

\section{Introduction}
Electrodiffusion, the transport of charged particles under the
combined effects of diffusion, migration in a self-consistent electric
field, and advection by a fluid, is a fundamental mechanism in
electrochemistry \cite{lee2016membrane}, semiconductor theory \cite{biler1994debye, gajewski1986basic}, and the physiology of ion
channels and membranes \cite{gao2014high, yang2019review, davidson2016dynamical, goldman1989electrodiffusion, qian1989electro}. When the
ambient fluid is an incompressible flow through a porous medium, or
when inertial effects are negligible, the fluid velocity is governed
by Darcy's law, and the resulting model is the
Nernst--Planck--Darcy (NPD) system. In this paper we consider the NPD
system for an arbitrary number $N \ge 2$ of ionic species with
different diffusivities and different valences, posed in the whole
space $\mathbb{R}^d$, $d \ge 3$, with decay at infinity:
\begin{align}
    &\partial_t c_i + u \cdot \na c_i - D_i \Delta c_i - z_i D_i \na  \cdot (c_i \na \phi) = 0, \hspace{0.5cm} i =1, \dots, N,\label{nernst_planck} \\
    &u + \nabla p = -\rho \nabla \phi, \label{darcy_eq} \\
    &\na \cdot u = 0,\label{div_free_eq}  \\
    &-\Delta \phi = \rho = \sum\limits_{i=1}^{N} z_ic_i \label{poisson},
\end{align} 
where $c_i = c_i(x,t) \ge 0$ denotes the local concentration of the
$i$-th ionic species, $D_i > 0$ its diffusivity, $z_i \in \mathbb{R}$
its valence, $\phi$ the electric potential generated by the charge
density $\rho$ via the Poisson equation \eqref{poisson}, $u$ the
fluid velocity, and $p$ the pressure. The system is supplemented with
initial conditions
\begin{equation}\label{eq:data}
  c_i(x, 0) = c_i(0) \ge 0, \qquad i = 1, \dots, N.
\end{equation}
The Nernst--Planck equations \eqref{nernst_planck} describe the evolution of
the ionic concentrations under diffusion, electromigration, and
transport by the fluid; the velocity $u$ is determined from the
concentrations through Darcy's law \eqref{darcy_eq}, in which the
electric force $-\rho \nabla \phi$ drives the flow. In fact, the velocity is
doubly nonlinear in the concentrations, hence the
advective term $u \cdot \nabla c_i$ couples the unknowns at cubic
order. This is in contrast with the Nernst--Planck--Navier--Stokes
(NPNS) system, where the velocity is an independent dynamical
variable, and it is both a difficulty --- the nonlinearity is stronger
--- and an opportunity: the velocity inherits, with no loss, the
regularity  of the concentrations.

In this paper, we prove the existence and uniqueness of analytic solutions to the NPD system for small initial concentrations in $L^{\frac{d}{2}}(\R^d)$ and sufficiently regular initial data:

\begin{thm} \label{extension}
Let $d \ge 3$ be an integer and $\epsilon > 0$. For $i \in \left\{1, \dots, N \right\}$, let $c_i(0) \in L^{d+\epsilon}(\R^d) \cap L^1(\R^d)$ if $d = 3,4$ and   $c_i(0) \in L^{2d-1}(\R^d) \cap L^1(\R^d)$ if $d \ge 5$. Suppose $c_i(0)$ is nonnegative for all $i \in \left\{1, \dots, N \right\}$ and $\int_{\R^d} \rho_0(x) dx = 0$. Moreover, assume that each $c_i(0)$ is sufficiently small in $L^{\frac{d}{2}}(\R^d)$. Then there exists a unique analytic solution $(c_1, \dots, c_N)$ to the electrodiffusion model \eqref{nernst_planck}--\eqref{poisson} on $(0, \infty)$. 
\end{thm}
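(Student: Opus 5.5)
The plan is to build the solution in two stages: a local solution from a Picard scheme in a space encoding an analytic gain of regularity, followed by a global extension driven by a priori bounds that use the smallness of the data in $L^{\frac{d}{2}}$.

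\textbf{Local existence.} Write the system in mild (Duhamel) form. For each $i$,
\[
c_i(t)=e^{D_i t\Delta}c_i(0)-\int_0^t e^{D_i(t-s)\Delta}\,\nabla\cdot\bigl(u c_i + z_iD_i c_i\nabla\phi\bigr)(s)\,ds,
\]
where $\nabla\phi=\nabla(-\Delta)^{-1}\rho$ and $u=-\mathbb P(\rho\nabla\phi)$, with $\mathbb P$ the Leray projector. Set $p=d+\epsilon$ if $d=3,4$ and $p=2d-1$ if $d\ge5$. I would run a fixed point in a space of functions $c\in L^\infty(0,T;L^p)$ whose analytic extension $e^{\sqrt{t}\,\lambda|\nabla|}c$ is controlled, together with the weighted norm $\sup_t t^{1/2}\|\nabla c\|_{L^p}$.

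For this choice of $p$, the nonlinear terms are estimable:
\begin{itemize}
\item By Hardy--Littlewood--Sobolev, $\|\nabla\phi\|_{L^\infty}\lesssim \|\rho\|_{L^{q_1}\cap L^{q_2}}$ for $q_1<d<q_2$. This works since $p>d$ and the $L^1$ bound supplies the low exponent.
\item Since $\mathbb P$ is bounded on $L^r$, $1<r<\infty$, we get $\|u\|_{L^{r}}\lesssim\|\rho\|_{L^{r}}\|\nabla\phi\|_{L^\infty}$.
\item The cubic term $u c_i$ then lies in $L^{p}$, with its norm controlled by the cube of the data norms.
\item The heat kernel factor $(t-s)^{-1/2}$ is integrable, so the map is a contraction for small $T$.
\end{itemize}
Analyticity follows by the standard Foias--Temam (or Lemari\'e) argument, since the Gevrey multiplier $e^{\sqrt t|\xi|}$ is subadditive and commutes with the product bounds. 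The condition $p\ge 2d-1$ for $d\ge5$ would enter precisely in making the $L^{r}$ Hölder pairing of the cubic term close with integrable time singularities.

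\textbf{Global a priori bounds.}
\begin{itemize}
\item \emph{Nonnegativity and mass.} Nonnegativity of $c_i$ holds by the maximum principle. Mass is conserved, so $\|c_i\|_{L^1}$ and $\int\rho=0$ are preserved.
\item \emph{Small $L^{\frac d2}$ norm.} Multiply the $i$-th equation by $c_i^{\frac d2-1}$. The transport term vanishes because $\nabla\cdot u=0$. The migration term gives, after integrating by parts, $\int c_i^{\frac d2}\rho$ up to a constant factor involving $z_i$; this is bounded by $\|c\|_{L^{\frac d2}}\,\|c_i^{\frac d4}\|_{L^{\frac{2d}{d-2}}}^2$, using Hölder and Sobolev. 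The dissipation is $\|\nabla c_i^{\frac d4}\|_{L^2}^2$, which controls the same quantity. Hence smallness of $\sum_i\|c_i\|_{L^{\frac d2}}$ propagates, with a dissipative integral in time.
\item \emph{Higher $L^q$ norms.} Repeat the same computation in $L^q$ for $q\le p$. The cancellation $\int u\cdot\nabla c_i\,c_i^{q-1}=0$ removes the cubic term. The migration term is absorbed using the small $L^{\frac d2}$ norm via the Gagliardo--Nirenberg inequality $\int c^{q}\rho\le\|\rho\|_{L^{d/2}}\|c^{q/2}\|_{L^{2d/(d-2)}}^2$. This gives global bounds on $\|c_i\|_{L^p}$.
\item \emph{Uniform gradient and analyticity bounds.} Combined with the local theory on time intervals of uniform length, these give uniform-in-time bounds on $t^{1/2}\|\nabla c\|_{L^p}$ and on the analyticity radius, so the local solution continues to $(0,\infty)$.
\end{itemize}

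\textbf{Uniqueness.} For two solutions, take the difference $\delta c_i$ and estimate $\|\delta c_i\|_{L^2}$, or better an $H^{-1}$-type norm, by energy methods. The main obstacle is the doubly nonlinear term
\[
\delta u=-\mathbb P\bigl(\delta\rho\,\nabla\phi+\rho\,\nabla\delta\phi\bigr),
\]
which enters through $\int \delta u\cdot\nabla c_i\,\delta c_i$. To control it I would use the instantaneous regularization from the local analytic step, namely $t^{1/2}\|\nabla c_i\|_{L^\infty}$ bounded, which follows from analyticity and $p>d$. The estimate then gives
\[
\frac{d}{dt}\|\delta c\|^2\le C\,t^{-1/2}\|\delta c\|^2+\text{(absorbed terms)}.
\]
Since $t^{-1/2}$ is integrable near $0$ and $\delta c(0)=0$, Gronwall's inequality yields $\delta c\equiv0$. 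If the $L^\infty$ gradient bound has a nonintegrable weight for $d\ge5$, I would instead close the estimate in $L^{p'}$-duality, using the stronger $L^{2d-1}$ data to gain the needed integrability.
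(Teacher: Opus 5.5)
Your proposal is correct in outline but takes a genuinely different route from the paper, and two of its exponents need repair.

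\textbf{Existence.} The paper regularizes and passes to the limit by compactness. It relies only on the a priori $L^p$ balance: the cancellation $\int c_i^{p/2}\nabla\phi\cdot\nabla c_i^{p/2}\,dx=\frac12\int c_i^p\rho\,dx$, plus absorption of the cross terms by the small $L^{d/2}$ norm, essentially as in your sketch. The paper only asserts analyticity, in Remark~\ref{rem:analyticity}. You feed the same bounds into a subcritical mild theory in $L^1\cap L^p$ with continuation, which produces smoothing and analyticity as part of the construction.

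\textbf{Uniqueness.} This is where the routes really diverge. The paper works in the symmetric class \eqref{reg1}/\eqref{reg2}. It moves the derivative onto the difference, $\int(u\cdot\nabla c_i^{(2)})c_i=-\int c_i^{(2)}u\cdot\nabla c_i$, and splits $u$ into the parts generated by $\rho\nabla\phi^{(1)}$ and $\rho^{(2)}\nabla\phi$. It then closes with space--time Lebesgue bounds ($L^3_tL^9_x$, $L^4_tL^8_x$, $L^{2d}_{t,x}$) extracted from the dissipation terms $\|\nabla c_i^{p/2}\|_{L^2}^2$ and $z_i^2\|c_i\|_{L^{p+1}}^{p+1}$. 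The need for $c^{(2)}\in L^4_tL^{2d}_x$ is what forces $L^{2d-1}$ data when $d\ge5$.

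Your argument is of weak--strong type. You keep $\nabla c^{(2)}$ on the smooth solution and use $\|\nabla c^{(2)}\|_{L^\infty}\in L^1(0,T)$. The other solution then only needs $L^\infty_t(L^1\cap L^{d+\epsilon})\cap L^2_tH^1$, through $\|\nabla\phi\|_{L^\infty}$, $\|\rho\|_{L^d}$ and $\|\nabla\delta\phi\|_{L^{2d/(d-2)}}\lesssim\|\delta\rho\|_{L^2}$.

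What this buys you is the threshold $p>d$ in every dimension. The condition $2d-1$ plays no role in your argument, so your guess that it enters the H\"older pairing of the cubic term is not right. The price is that you need pointwise parabolic smoothing of one solution, i.e.\ the whole mild/analytic machinery. The paper uses no derivative information beyond $L^2_tH^1_x$.

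\textbf{Corrections.} Both are rescued by $p>d$, which is the true role of that hypothesis.
\begin{enumerate}
\item $t^{1/2}\|\nabla c(t)\|_{L^\infty}$ is not bounded for general $L^p$ data. For the heat flow of $|x|^{-a}$ near the origin with $0<a<d/p$, it grows like $t^{-a/2}$. Analyticity with radius $\sim\sqrt t$ together with the $L^p$ bound gives $\|\nabla c(t)\|_{L^\infty}\lesssim t^{-\frac12-\frac{d}{2p}}$ for $t\le1$. This, not $t^{-1/2}$, is your Gronwall weight; it is integrable at $0$ precisely because $p>d$. For $t\ge1$ the gradient is bounded by restarting the local theory with the global $L^1\cap L^p$ bounds. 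Your fallback to $L^{p'}$ duality for $d\ge5$ is therefore unnecessary.
\item In the fixed point, $uc_i$ lies in $L^{p/2}$, not $L^p$: the Leray projector gives $u\in L^p$, not $L^\infty$. So the Duhamel kernel costs $(t-s)^{-\frac12-\frac{d}{2p}}$ rather than $(t-s)^{-\frac12}$, which is again integrable exactly when $p>d$.
\end{enumerate}

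\textbf{Analyticity.} The Foias--Temam Gevrey trick is Fourier/$L^2$-based. For $L^p$ data you need its $L^p$ counterpart: Lemari\'e-Rieusset's version, or the complexification method of Gruji\'c--Kukavica that the paper cites.
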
 

The smallness condition  is imposed exclusively in
the scaling-critical norm (see Remark \ref{rem:criticality}); no size restriction, moment condition, or
additional regularity is required of the data. We emphasize that the
result holds for arbitrary numbers of species, arbitrary (possibly
mixed-sign) valences, and unequal diffusivities: no structural
condition such as electroneutrality of the data, equal diffusivities,
or a two-species antisymmetric configuration is assumed. This level of
generality is not merely cosmetic. Many of the techniques developed
for three-dimensional electrodiffusion systems degrade or fail outside special
configurations: energy structures based on relative entropy yield conditional global well-posedness, and cancellations
available for two species with opposite valences and equal
diffusivities are absent in the general case. 

To the best of our knowledge, all uniqueness results for the NPD
system available in the literature require initial data in Sobolev
spaces of positive regularity \cite{abdo2025long, abdo2026long, ignatova2022global}. Theorem~\ref{extension} lowers the
threshold to the Lebesgue scale. The proof relies on two mechanisms.
The first is a dissipative structure encoded in the energy estimates:
solutions gain smoothness instantaneously, beyond the regularity of
the initial data, and this gain can be quantified along the evolution
in a form that survives the passage to the difference equation. The
second is a careful analysis of the doubly nonlinear velocity term:
since $u$ is quadratic in the concentrations, the difference of the
advective terms of two solutions produces contributions in which the
difference of the unknowns is multiplied by low-regularity factors;
closing the estimates requires distributing the gained regularity
asymmetrically between the two solutions. Neither mechanism appears
to have been exploited previously in the analysis of the NPD system.

The global regularity of the solutions in the $L^{p}$ scale is
established by classical energy estimates combined with structural
cancellations which eliminate the contributions of the highest-order
nonlinear terms from the energy balance. What survives these
cancellations is not merely a closed differential inequality but a
family of dissipative structures: the resulting energy inequalities
carry additional terms from which the
solutions gain regularity beyond that imposed on the initial data,
instantaneously in time and quantitatively along the evolution. The
principal difficulty in the proof of uniqueness stems from the
velocity which, being slaved to the ionic concentrations through
Darcy's law, enters the equation for the difference of two solutions
as a doubly nonlinear term at a low level of regularity; it is
precisely the regularity gained from the dissipative structures that
permits this term to be controlled, by distributing the gain
carefully between the two solutions. The dimension plays a decisive
role in this part of the analysis. The electric potential contributes
to Darcy's law through $\nabla\phi = \nabla(-\Delta)^{-1}\rho$, a
nonlocal operator of order $-1$ acting on the charge density; on the
whole space, where no Poincar\'e inequality is available, the low
frequencies of this term cannot be absorbed by the dissipation and
must instead be handled through Hardy--Littlewood--Sobolev embeddings
and interpolation inequalities whose admissible ranges of exponents
contract as the dimension increases. This is the source of the
dimensional dichotomy in our uniqueness result: initial data in
$L^{d+\epsilon}(\mathbb{R}^d)$ suffice in dimensions $3 \le d \le 4$,
whereas in dimensions $d \ge 5$ the argument requires initial data in
$L^{2d-1}(\mathbb{R}^d)$. Finally, the solutions
constructed here are analytic in the space variables for positive
times, with quantitative lower bounds on the analyticity radius, as
can be shown by the technique of \cite{grujic1998space} for
$L^p$ data (see Remark \ref{rem:analyticity}). Whether the smallness condition in 
\eqref{extension} can be removed for large critical data, and
whether uniqueness holds in the critical space $L^{d/2}$ itself,
remain open problems that we believe are of independent interest.

In the context of the literature on
electrodiffusion models, the Nernst--Planck equations coupled with the
Navier--Stokes equations (NPNS) have been the object of intense study
in recent years. Global well-posedness and stability in two dimensions,
and conditional or small-data results in three dimensions, were
obtained in \cite{constantin2019nernst, constantin2021nernst, fischer2017global, abdo2022space, lee2021global, constantin2022nernst}; see also
\cite{abdo2026three, ignatova2021global}, for the
Nernst--Planck--Boussinesq and Nernst--Planck--Euler variants. For the
Nernst--Planck--Darcy system itself, which arises in electrokinetic
flows through porous media, the literature is more sparse: existence and
uniqueness results have been obtained in \cite{abdo2026long, herz2012existence, ignatova2022global}, as noted above, under positive Sobolev regularity of the data.

The paper is organized as follows. In Section~\ref{sec2}, we establish the global well-posedness of the NPD system. Concluding remarks are collected
in Section~\ref{sec3}.

\section{Existence and Uniqueness of Solutions}\label{sec2}

This is the main section of our paper in which we establish the global well-posedness of our $d$-dimensional NPD system. 

We need first the following elliptic regularity estimate: 

\begin{prop}\label{prop2.1}
    Let $d \ge 3$ be an integer, and let $p > d$ be a real number. Let $g$ be in $L^1(\R^d) \cap L^p(\R^d)$ such that $\int_{\R^d} g = 0$. Let $\Psi$ be a solution to the Poisson equation $- \Delta \Psi = g$  with decay at infinity. Then the following elliptic regularity estimate
    \be
\|\na \Psi\|_{L^{\infty}} \le C\left(\|g\|_{L^1} + \|g\|_{L^p} \right)
    \ee holds.
\end{prop}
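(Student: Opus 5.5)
We use the Newtonian potential representation. Since $\Psi$ decays at infinity and $g \in L^1 \cap L^p$, we have $\Psi = \Gamma * g$ with $\Gamma(x) = c_d |x|^{2-d}$. Hence
\[
\na \Psi(x) = -c_d (d-2) \int_{\R^d} \frac{x-y}{|x-y|^{d}}\, g(y)\, dy ,
\]
which gives the pointwise bound $|\na \Psi(x)| \le C \int_{\R^d} |x-y|^{1-d} |g(y)|\, dy$. To justify this representation we argue as follows. The difference between $\Psi$ and $\Gamma * g$ is harmonic and decays at infinity, so it vanishes by Liouville's theorem. We also check that $\Gamma * g$ is well defined and that its gradient is the convolution displayed above; both follow from the local integrability of $|x|^{1-d}$ together with the splitting below.

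Fix $x$ and split the integral into the regions $|x-y| \le 1$ and $|x-y| > 1$.

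\textbf{Far region.} Here $|x-y|^{1-d} \le 1$, so the contribution is bounded by $\|g\|_{L^1}$.

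\textbf{Near region.} Apply H\"older's inequality with exponents $p$ and $p' = p/(p-1)$:
\[
\int_{|x-y|\le 1} |x-y|^{1-d} |g(y)|\,dy \le \|g\|_{L^p} \left( \int_{|z| \le 1} |z|^{(1-d)p'}\, dz \right)^{1/p'} .
\]
The last integral is finite exactly when $(d-1)p' < d$, that is, $p' < d/(d-1)$, which is equivalent to $p > d$. This is precisely the hypothesis of the proposition.

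Taking the supremum over $x$ then gives $\|\na\Psi\|_{L^\infty} \le C(\|g\|_{L^1} + \|g\|_{L^p})$, with $C$ depending only on $d$ and $p$.

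The main point to check is the near-field integrability threshold, which is where $p > d$ enters. The remaining delicate issue is uniqueness: the phrase ``solution with decay at infinity'' must be matched with the Newtonian potential. For this we note that $\Gamma * g \to 0$ at infinity, which again follows from the same splitting combined with dominated convergence. So any decaying solution coincides with it, by Liouville's theorem applied to a bounded harmonic function tending to zero.

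The zero-mean assumption on $g$ is not needed for this bound. It would matter only for improved decay of $\Psi$ or of $\na\Psi$, so we do not use it here.
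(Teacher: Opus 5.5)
Your proof is correct, but it follows a different route from the paper's. The paper never works with the kernel directly:
\begin{itemize}
\item it applies Morrey's inequality (the embedding of $W^{1,p}(\mathbb{R}^d)$ into $L^\infty(\mathbb{R}^d)$, which is where $p>d$ enters) to $\nabla\Psi$;
\item it controls $\nabla^2\Psi$ in $L^p$ by $\|g\|_{L^p}$, tacitly using Calder\'on--Zygmund;
\item it bounds $\|\nabla\Psi\|_{L^p}$ by $\|(-\Delta)^{-1/2}g\|_{L^p}\le C\|g\|_{L^{pd/(d+p)}}$ via Riesz transforms and Hardy--Littlewood--Sobolev;
\item it interpolates $L^{pd/(d+p)}$ between $L^1$ and $L^p$ and finishes with Young's inequality.
\end{itemize}
You instead use the pointwise bound $|\nabla\Psi(x)|\le C\int|x-y|^{1-d}|g(y)|\,dy$ with a near/far split. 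There $p>d$ appears as the condition $|z|^{1-d}\in L^{p'}(B_1)$, and the $L^1$ norm absorbs the tail.

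Your argument has three advantages. It is more elementary, with no singular integrals and no interpolation step. It makes explicit the identification of the decaying solution with the Newtonian potential, which the paper uses only implicitly when it expresses $\nabla\Psi$ through $(-\Delta)^{-1/2}g$. And if you keep the splitting radius $R$ free and optimize, it gives the scale-invariant bound $\|\nabla\Psi\|_{L^\infty}\le C\|g\|_{L^1}^{\theta}\|g\|_{L^p}^{1-\theta}$ with $\theta=\frac{p-d}{d(p-1)}$.

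The paper's operator-theoretic route has its own strengths. It relies only on $L^p$ mapping properties rather than pointwise kernel bounds, so it transfers more easily to settings without an explicit kernel. It also yields $\nabla\Psi\in W^{1,p}$ and HLS-type bounds along the way, of the kind used later in the uniqueness argument.

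Your observation that the zero-mean hypothesis is unnecessary is consistent with the paper: its proof does not use $\int g=0$ either.
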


\begin{proof}
    We have
    \be 
\begin{aligned}
\|\nabla\Psi\|_{L^\infty} 
&\le C(\|\nabla\Psi\|_{L^p} +\|g\|_{L^p}) 
\le C(\|(-\Delta)^{-1/2}g\|_{L^p} + \|g\|_{L^p}) 
\le C(\|g\|_{L^\frac{pd}{d+p}} + \|g\|_{L^p})
\\&\le C(\|g\|_{L^1}^\theta \|g\|_{L^p}^{1-\theta} + \|g\|_{L^p}) 
\le C\|g\|_{L^1} + C\|g\|_{L^p}
\end{aligned}
\ee where $\theta \in (0,1)$ is the parameter resulting from interpolating $L^{\frac{pd}{d+p}}$ between $L^1$ and $L^p$. Here, we made use of the Morrey, Hardy-Littlewood-Sobolev, and Young inequalities.
\end{proof}

\begin{thm} \label{main} Let $d \ge 3$ be an integer and $\epsilon > 0$. For $i \in \left\{1, \dots, N \right\}$, let $c_i(0) \in L^{d+\epsilon}(\R^d) \cap L^1(\R^d)$ if $d = 3,4$ and   $c_i(0) \in L^{2d-1}(\R^d) \cap L^1(\R^d)$ if $d \ge 5$. Suppose $c_i(0)$ is nonnegative for all $i \in \left\{1, \dots, N \right\}$ and $\int_{\R^d} \rho_0(x) dx = 0$. Moreover, assume that each $c_i(0)$ is sufficiently small in $L^{\frac{d}{2}}(\R^d)$. Then there exists a unique solution $(c_1, \dots, c_N)$ to the electrodiffusion model \eqref{nernst_planck}--\eqref{poisson} such that 
\be \la{reg1}
c_i \in L^{\infty}(0, \infty; L^{d+\epsilon}(\R^d)) \cap L^{d+1}(0, \infty; L^{d+1}(\R^{d})) \cap L^2(0, \infty; H^1(\R^d)) \cap L^d(0, \infty; L^{\frac{d^2}{d-2}}(\R^d))
\ee for any $i \in \left\{1, \dots, N \right\}$ if $d = 3,4$, and 
\be \la{reg2}
c_i \in L^{\infty}(0, \infty; L^{2d-1}(\R^d)) \cap L^{2d}(0, \infty; L^{2d}(\R^{d})) \cap L^2(0, \infty; H^1(\R^d)) 
\ee for any $i \in \left\{1, \dots, N \right\}$, if $d \ge 5$.
\end{thm}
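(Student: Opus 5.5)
We prove existence by a priori $L^p$ energy estimates on a regularized (e.g.\ mollified) system, pass to the limit by compactness, and then prove uniqueness by an $L^2$ estimate for the difference of two solutions that uses the regularity in \eqref{reg1}--\eqref{reg2}. Nonnegativity is preserved by the maximum principle, and mass is conserved, so $\|c_i(t)\|_{L^1}=\|c_i(0)\|_{L^1}$ and $\int\rho(t)=0$ for all $t$.

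\textbf{Step 1: smallness in $L^{d/2}$.} Multiply \eqref{nernst_planck} by $c_i^{p-1}$ with $p=d/2$. The advection term vanishes since $\na\cdot u=0$. The diffusion term gives $\frac{4(p-1)}{p^2}D_i\|\na c_i^{p/2}\|_{L^2}^2$. After integrating by parts, the migration term becomes $-z_iD_i\frac{p-1}{p}\int c_i^p\rho$, which is bounded by $\|c_i^{p/2}\|_{L^{2d/(d-2)}}^2\|\rho\|_{L^{d/2}}\le C\|\na c_i^{p/2}\|_{L^2}^2\sum_j\|c_j\|_{L^{d/2}}$ by H\"older and Sobolev. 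If $\sum_j\|c_j(0)\|_{L^{d/2}}$ is small, a continuity argument keeps it small for all times and yields $\int_0^\infty\|\na c_i^{d/4}\|_{L^2}^2<\infty$. Interpolating with $L^1$ gives uniform bounds in every $L^q$ with $1\le q\le d/2$.

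\textbf{Step 2: higher $L^p$ norms.} Repeat the computation with $p=d+\epsilon$ for $d=3,4$ and $p=2d-1$ for $d\ge5$. The migration term is again $C\int c_i^p|\rho|$. Bound it by $\|c_i^{p/2}\|_{L^{2d/(d-2)}}^2\|\rho\|_{L^{d/2}}$, plus cross terms $\int c_i^pc_j$ that are handled the same way after Young's inequality on the powers. Smallness in $L^{d/2}$ absorbs all of these into the dissipation, so no Gronwall factor appears and we get $c_i\in L^\infty(0,\infty;L^p)$ together with $\int_0^\infty\|\na c_i^{p/2}\|_{L^2}^2<\infty$. Sobolev embedding then gives $c_i^{p/2}\in L^2_tL^{2d/(d-2)}_x$, that is, $c_i\in L^p_tL^{pd/(d-2)}_x$. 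Interpolating this with $L^\infty_tL^{d/2}_x$ yields the space-time bounds in $L^{d+1}_{t,x}$ (resp.\ $L^{2d}_{t,x}$) and in $L^d_tL^{d^2/(d-2)}_x$. For the $L^2_tH^1$ bound, run the $p=2$ estimate. It is available since $2\in[1,p]$, and the $\int c_i^2|\rho|$ term is absorbed as before.

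\textbf{Step 3: existence.} The bounds are uniform in the regularization. Darcy's law gives $u=-\mathbb{P}(\rho\na\phi)$, and Proposition~\ref{prop2.1} gives $\|\na\phi\|_{L^\infty}\le C(\|\rho\|_{L^1}+\|\rho\|_{L^p})$ with $p>d$, which bounds $u$ in $L^\infty_tL^q_x$ for a range of $q$ via Calder\'on--Zygmund. This controls $\partial_tc_i$ in a negative Sobolev space, so Aubin--Lions compactness lets us pass to the limit in the nonlinear terms.

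\textbf{Step 4: uniqueness (main obstacle).} Let $\delta_i=c_i-\tilde c_i$, and write $\delta\rho$, $\delta\phi$, $\delta u$ for the corresponding differences. Take the $L^2$ energy estimate for $\delta_i$. The term $\int\tilde c_i\na\delta\phi\cdot\na\delta_i$ is bounded by $\|\tilde c_i\|_{L^{d}}\|\na\delta\phi\|_{L^{2d/(d-2)}}\|\na\delta_i\|_{L^2}$, and HLS gives $\|\na\delta\phi\|_{L^{2d/(d-2)}}\le C\|\delta\rho\|_{L^2}$. The term $\int\delta_i\na\phi\cdot\na\delta_i$ uses $\|\na\phi\|_{L^\infty}$ from Proposition~\ref{prop2.1}. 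The hard term is $\int\delta u\cdot\na\tilde c_i\,\delta_i$, where $\delta u=-\mathbb{P}(\delta\rho\na\phi+\tilde\rho\na\delta\phi)$. We integrate by parts to move the derivative onto $\delta_i$, giving $\int\delta u\,\tilde c_i\cdot\na\delta_i$. We then estimate $\|\delta u\|_{L^r}\lesssim\|\delta\rho\|_{L^2}\|\na\phi\|_{L^\infty}+\|\tilde\rho\|_{L^d}\|\delta\rho\|_{L^2}$ and place $\tilde c_i$ in the gained space $L^{d+1}_{t,x}$ (resp.\ $L^{2d}_{t,x}$), interpolating $\|\delta\|$ between $L^2$ and $\dot H^1$. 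This produces
\[
\frac{d}{dt}\|\delta\|_{L^2}^2\le g(t)\|\delta\|_{L^2}^2,
\]
where $g$ involves $\|\tilde c\|_{L^{d+1}}^{d+1}$ (resp.\ $\|\tilde c\|_{L^{2d}}^{2d}$) together with $\|\na\phi\|_{L^\infty}^2$, and the Young exponents are chosen so that $g\in L^1(0,\infty)$. Gronwall's inequality then gives $\delta\equiv0$.

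The delicate point is the exponent bookkeeping for the cubic term, specifically making $g$ integrable. This is where the choice $p>d$ versus $p=2d-1$, and thus the dichotomy between $d\le4$ and $d\ge5$, enters. I would first try a Gagliardo--Nirenberg interpolation of $\|\delta\|_{L^{q}}$ between $L^2$ and $\dot H^1$, and then verify that the resulting time exponent on $\tilde c$ matches $d+1$ (resp.\ $2d$).
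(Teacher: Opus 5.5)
Steps 1--3 match the paper's a priori argument in substance. Step 4, however, has a genuine gap, and it sits exactly in the bookkeeping you postpone: the check you propose would fail for $d=3,4$.

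\textbf{Why your Step 4 does not close.} Your bound $\|\delta u\|_{L^r}\lesssim\|\delta\rho\|_{L^2}\|\nabla\phi\|_{L^\infty}+\|\tilde\rho\|_{L^d}\|\delta\rho\|_{L^2}$ is the case $r=2$. Pairing it with $\nabla\delta_i\in L^2$ in $\int\delta u\,\tilde c_i\cdot\nabla\delta_i$ would require $\tilde c_i\in L^\infty_x$, which you do not have.

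Suppose instead you put $\tilde c_i\in L^{d+1}_x$. Then $r=2(d+1)/(d-1)$, and Gagliardo--Nirenberg gives $\|\delta\rho\|_{L^r}\lesssim\|\delta\rho\|_{L^2}^{1-\theta}\|\nabla\delta\rho\|_{L^2}^{\theta}$ with $\theta=d/(d+1)$. Young's inequality then leaves the factor $(\|\nabla\phi\|_{L^\infty}\|\tilde c_i\|_{L^{d+1}})^{2/(1-\theta)}=(\|\nabla\phi\|_{L^\infty}\|\tilde c_i\|_{L^{d+1}})^{2(d+1)}$, not $\|\tilde c\|_{L^{d+1}}^{d+1}$.

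The piece $\tilde\rho\,\nabla\delta\phi$ with $\tilde\rho,\tilde c_i\in L^{d+1}$ costs the same power $2(d+1)$. For $d=3$ that H\"older count is worse still: it puts $\nabla\delta\phi$ in $L^\infty$, which is the excluded Hardy--Littlewood--Sobolev endpoint. Since $L^{d+1}_{t,x}$ controls only $\int\|\tilde c\|_{L^{d+1}}^{d+1}\,dt$, this does not close for $\epsilon<1$.

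\textbf{How the paper closes it.} The paper treats the two pieces of $\delta u$ differently.
\begin{itemize}
\item For $\delta\rho\,\nabla\phi$, it keeps $\tilde c_i$ in $L^\infty_tL^{d+\epsilon}_x$ and $\nabla\phi$ in $L^\infty_{t,x}$ via Proposition~\ref{prop2.1}. With $\theta=d/(d+\epsilon)$ the resulting coefficient is bounded in time, so no gained regularity is needed.
\item For $\tilde\rho\,\nabla\delta\phi$, it uses H\"older in $L^{2d}\cdot L^{2d}\cdot L^{2d/(d-2)}\cdot L^2$ together with $\|\nabla\delta\phi\|_{L^{2d/(d-2)}}\lesssim\|\delta\rho\|_{L^2}$. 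There is no interpolation on the difference, so the coefficient is $\|\tilde c\|_{L^{2d}}^4$.
\end{itemize}
The coefficient $\|\tilde c\|_{L^{2d}}^4$ is locally integrable in each dimension:
\begin{itemize}
\item for $d=4$, via $L^4_tL^8_x$;
\item for $d=3$, via $\|\tilde c\|_{L^6}^4\le\|\tilde c\|_{L^3}\|\tilde c\|_{L^9}^3$ and $L^3_tL^9_x$;
\item for $d\ge5$, via $L^{2d}_{t,x}$, where your choice of space does work.
\end{itemize}
So for $d=3,4$ the gained space that matters is $L^d_tL^{d^2/(d-2)}_x$, which you listed but did not use, not $L^{d+1}_{t,x}$.

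\textbf{Smaller points.}
\begin{itemize}
\item Your $g$ is only in $L^1_{\mathrm{loc}}$, not $L^1(0,\infty)$, because $\|\nabla\phi\|_{L^\infty}$ is merely bounded. That is all Gronwall needs.
\item $L^d_tL^{d^2/(d-2)}_x$ cannot be obtained by interpolating the $p=d+\epsilon$ bound with $L^\infty_t$ bounds. Instead run the $L^p$ estimate at $p=d$, which is admissible since $c_i(0)\in L^1\cap L^{d+\epsilon}$, or interpolate with your $p=2$ bound.
\item On the positive side, your interpolation route to $L^{p+1}_{t,x}$ does not need $z_i\neq0$. The paper's use of the term $D_iz_i^2\|c_i\|_{L^{p+1}}^{p+1}$ does.
\end{itemize}
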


\begin{proof}
In order to prove the global well-posedness of the model \eqref{nernst_planck}--\eqref{poisson}, we construct a regularizing system where the electric potential gradient, transport velocity, and initial data are mollified, then we establish uniform bounds in the regularization parameter and pass to the limit through applications of the Aubin-Lions lemma and the Banach-Alaoglu theorems. This type of argument is classical in the literature and will be omitted here. We focus on deriving a priori estimates.

The proof of Theorem \ref{main} is divided into two major steps.

{\bf{Step 1. $L^p$ estimates.}} Fix a real number $p > 1$. For each $i\in \{1,...,N\}$, we multiply the equation obeyed by the $i$-th ionic concentration by $c_i^{p-1}$, integrate over $\R^d$, and obtain the evolution equation 
\begin{align}\label{energyeqD}
\frac{1}{p} \frac{d}{dt} \|c_i\|_{L^p}^p + \frac{4D_i(p-1)}{p^2} \left\| \nabla c_i^{p/2} \right\|_{L^2}^2 = -\frac{2D_i z_i(p-1)}{p}\int_{\mathbb{R}^d} c_i^{p/2} \nabla\phi\cdot \nabla c_i^{p/2}\,dx.
\end{align}
Here the nonlinear term in $u$ vanishes due to the divergence-free property obeyed by $u$. 

The electromigration term satisfies a cancellation law by which the ionic concentration derivatives will not have any contribution. Indeed, integrating by parts and using the Poisson equation obeyed by the potential $\phi$, we have
\be
\int_{\mathbb{R}^d} c_i^{p/2} \nabla\phi\cdot \nabla c_i^{p/2}\,dx = \int_{\mathbb{R}^d} c_i^{p} \rho\,dx - \int_{\mathbb{R}^d} c_i^{p/2} \nabla\phi\cdot \nabla c_i^{p/2}\,dx,
\ee from which we deduce that 
\be
\int_{\mathbb{R}^d} c_i^{p/2} \nabla\phi\cdot \nabla c_i^{p/2}\,dx =\frac{1}{2}\int_{\mathbb{R}^d} c_i^{p} \rho\,dx.
\ee
Replacing $\rho$ by its law and splitting the sum, we obtain
\begin{align}
\int_{\mathbb{R}^d} c_i^{p/2} \nabla\phi\cdot \nabla c_i^{p/2}\,dx =\frac{1}{2}\int_{\mathbb{R}^d} c_i^{p/2}c_i^{p/2} \sum_{j\neq i}z_jc_j\,dx + \frac{1}{2}z_i\int_{\mathbb{R}^d}c_i^{p+1}\,dx.
\end{align}
Substituting the latter identity into \eqref{energyeqD} yields 
\begin{align}
&\frac{1}{p} \frac{d}{dt} \|c_i\|_{L^p}^p + \frac{4D_i(p-1)}{p^2} \left\| \nabla c_i^{p/2} \right\|_{L^2}^2 
\\&\quad\quad= -\frac{D_i z_i(p-1)}{p}\int_{\mathbb{R}^d} c_i^{p/2}c_i^{p/2} \sum_{j\neq i}z_jc_j\,dx -\frac{D_i z_i^2(p-1)}{p}\int_{\mathbb{R}^d}c_i^{p+1}\,dx,
\end{align}
which simplifies to 
\begin{align}\label{energyD}
&\frac{1}{p} \frac{d}{dt} \|c_i\|_{L^p}^p + \frac{4D_i(p-1)}{p^2} \left\| \nabla c_i^{p/2} \right\|_{L^2}^2 + \frac{D_i z_i^2(p-1)}{p}\|c_i\|_{L^{p+1}}^{p+1} \\&\quad\quad= -\frac{D_i z_i(p-1)}{p}\int_{\mathbb{R}^d} c_i^{p/2}c_i^{p/2} \sum_{j\neq i}z_jc_j\,dx.
\end{align}
Using Hölder's inequality with exponents
$
\frac{2d}{d-2},\;\frac{2d}{d-2},\;\frac{d}{2}$, and
the continuous Sobolev embedding of $
\dot{H}^1(\mathbb{R}^d)$ into $L^\frac{2d}{d-2}(\mathbb{R}^d)$, we have
\begin{align}\label{rhsD2}
\left| -\frac{D_i z_i(p-1)}{p}\int_{\mathbb{R}^d} c_i^{p/2}c_i^{p/2}\sum_{j\neq i} z_jc_j\,dx. \right|
&\leq \frac{CD_i z^2(p-1)}{p} \|\nabla c_i^{p/2}\|^2_{L^2}\sum_{j\neq i}\|c_j\|_{L^{d/2}},
\end{align} where $z = \max \left\{z_1, \dots, z_N\right\}.$ Denoting by $D$ the maximum of $D_1, \dots, D_N$, the latter boils down to 
\begin{align}
&\left| -\frac{D_i z_i(p-1)}{p}\int_{\mathbb{R}^d} c_i^{p/2}c_i^{p/2}\sum_{j\neq i} z_jc_j\,dx. \right|
\\&\quad\quad\leq C_{D,z,p, N}\|\nabla c_i^{p/2}\|^2_{L^2}\sum_{j\neq i} \|c_j\|_{L^{d/2}}^{d/2} + \frac{2D_i(p-1)}{p}\|\nabla c_i^{p/2}\|^2_{L^2}
\end{align} after applying Young's inequality for products.
Therefore, we infer that 
\begin{align}\label{energyD3}
\frac{1}{p} \frac{d}{dt} \|c_i\|_{L^p}^p + \frac{2D_i(p-1)}{p^2} \left\| \nabla c_i^{p/2} \right\|_{L^2}^2 + \frac{D_i z_i^2(p-1)}{p}\|c_i\|_{L^{p+1}}^{p+1} \leq C_{D, z,  p, N}\|\nabla c_i^{p/2}\|^2_{L^2}\sum_{j\neq i} \|c_j\|_{L^{d/2}}^{d/2}.
\end{align}
Summing over all indices $i\in\{1, \dots, N\}$, we obtain the differential inequality 
\be 
\begin{aligned}\label{energyD4}
&\frac{d}{dt} \sum_{i=1}^N \|c_i\|_{L^p}^p + \sum\limits_{i=1}^{N} \frac{D_i(p-1)}{p} \left\| \nabla c_i^{p/2} \right\|_{L^2}^2 + \sum\limits_{i=1}^{N} {D_i z_i^2(p-1)}\|c_i\|_{L^{p+1}}^{p+1}   
\\&\quad\quad\leq \sum_{i=1}^N \left\| \nabla c_i^{p/2} \right\|_{L^2}^2\bigg(C_{D, z,  p, N} \sum_{j = i}^N \|c_j\|_{L^{d/2}}^{d/2} - \frac{D_i(p-1)}{p}\bigg).
\end{aligned}
\ee 
Now, choosing
\begin{equation}
\delta = \frac{ \min \left\{D_1, \dots, D_N\right\}(p-1)}{p C_{D, z,  p, N}} >0 ,
\end{equation}
and assuming
\begin{equation}\label{smallness}
\sum_{j=1}^{N}\|c_j(0)\|_{L^{d/2}}^{d/2} < \delta,
\end{equation}
it follows, by continuity, that 
\begin{equation}
\sum_{j=1}^{N}\|c_j(t)\|_{L^{d/2}}^{d/2} \le \delta
\end{equation} on a short time interval $[0, T_0]$. 
Choosing $p=\frac d2$ in \eqref{energyD4}, we obtain
\begin{equation}
\frac{d}{dt} \sum_{i=1}^{N} \|c_i(t)\|_{L^{d/2}}^{d/2} \leq 0
\end{equation} on $[0,T_0]$. 
Therefore,
\begin{equation}
\sum_{i=1}^{N} \|c_i(t)\|_{L^{d/2}}^{d/2} \leq \sum_{i=1}^{N} \|c_i(0)\|_{L^{d/2}}^{d/2} <\delta
\end{equation} on $[0,T_0]$. This allows extension beyond time $T_0$ to any subsequent time $T>0$, yielding 
\begin{equation}
\sum_{j=1}^{N}\|c_i(t)\|_{L^{d/2}}^{d/2} \leq \sum_{j=1}^{N} \|c_i(0)\|_{L^{d/2}}^{d/2}
\end{equation} for any $t \ge 0$. 
Moreover, the differential inequality \eqref{energyD4} implies that 
\begin{equation} \label{eee}
\frac{d}{dt} \sum_{i=1}^{N} \|c_i(t)\|_{L^{p}}^{p} + \sum\limits_{i=1}^{N} \frac{D_i(p-1)}{p} \left\| \nabla c_i^{p/2} \right\|_{L^2}^2 + \sum\limits_{i=1}^{N} {D_i z_i^2(p-1)}\|c_i\|_{L^{p+1}}^{p+1}  \leq 0
\end{equation}  for any $p > 1$. 
Consequently, in dimensions $d= 3,4$, and at any time $t \ge 0$, we have 
\be 
\sum_{i=1}^{N} \|c_i(t)\|_{L^{d+\epsilon}}^{d+\epsilon} 
\le \sum_{i=1}^{N} \|c_i(0)\|_{L^{d+\epsilon}}^{d+\epsilon} 
\ee when $p = d + \epsilon$, 
\be 
\int_{0}^{\infty} \sum\limits_{i=1}^{N} {D_i z_i^2(d-1)}\|c_i\|_{L^{d+1}}^{d+1} dt \le \sum_{i=1}^{N} \|c_i(0)\|_{L^{d}}^{d}
\ee when $p = d$, and 
\be 
\int_{0}^{\infty} \sum\limits_{i=1}^{N} \frac{D_i}{2} \left\| \nabla c_i \right\|_{L^2}^2  dt \le \sum_{i=1}^{N} \|c_i(0)\|_{L^{2}}^{2} 
\ee when $p =2$. 
This shows that $c_i \in L^{\infty}(0,\infty, L^{d+\epsilon}(\R^d)) \cap L^{d+1}(0, \infty; L^{d+1}(\R^{d+1})) \cap L^2(0,\infty; \dot{H}^1(\R^d))$ for every $i \in \left\{1, \dots, N \right\}$. Moreover, it follows from \eqref{eee} with $p=d$ that 
\be 
\|c_i\|_{\frac{d^2}{d-2}}^{\frac{d}{2}}= \|c_i^{d/2}\|_{L^{\frac{2d}{d-2}}} \le C\|\na c_i^{d/2}\|_{L^2}
\ee due to the continuous Sobolev embedding of $\dot{H}^{1}(\R^d)$ in $L^{\frac{2d}{d-2}}(\R^d)$, and thus it follows from \eqref{eee} with $p=d$ that $c_i \in L^{d}(0, \infty; H^{\frac{d^2}{d-2}}(\R^d))$ for any $i \in \left\{1, \dots, N\right\}$. The regularity in dimensions $d  \ge 5$ also follows from $\eqref{eee}$ by taking $p = 2d-1$.

{\bf{Step 2. Uniqueness of solutions. }} Suppose $(c_1^{(1)}, \dots, c_N^{(1)})$ and $(c_1^{(2)}, \dots, c_N^{(2)})$ are two solutions of \eqref{nernst_planck}--\eqref{poisson} , both having the same initial data and satisfying the regularity criteria \eqref{reg1} if $d=3, 4$ and \eqref{reg2} if $d \ge 5$. For $i \in \left\{1, \dots, N \right\}$, we denote by $c_i, u,$ and $\phi$ the differences
\[
c_i=c_i^{(1)}-c_i^{(2)},\qquad u=u^{(1)}-u^{(2)},\qquad \phi=\phi^{(1)}-\phi^{(2)},
\]
and we let 
\[
\rho=\sum_{j=1}^N z_jc_j 
\] be the corresponding density. 
The equation obeyed by $c_i$ is given by 
\[
\partial_t c_i +u^{(1)}\cdot\nabla c_i +u\cdot\nabla c_i^{(2)} -D_i\Delta c_i -z_iD_i\nabla\cdot( c_i\nabla\phi^{(1)})- z_iD_i\nabla\cdot(c_i^{(2)}\nabla\phi)=0 .
\]
Multiplying the latter by $c_i$,  integrating over $\mathbb R^d$, using the divergence-free property obeyed by $u^{(1)}$, and summing over all indices $i \in \left\{1, \dots, N \right\}$, we obtain the energy equality
\be 
\begin{aligned}\label{fullsum}
&\frac12\frac{d}{dt}\sum_{i=1}^N\|c_i\|_{L^2}^2 +\sum_{i=1}^ND_i\|\nabla c_i\|_{L^2}^2
\\&\quad= -\sum_{i=1}^N\int_{\mathbb R^d} (u\cdot\nabla c_i^{(2)})c_i 
+\sum_{i=1}^Nz_iD_i \int_{\mathbb R^d} c_i\nabla\phi^{(1)}\cdot\nabla c_i 
 +\sum_{i=1}^Nz_iD_i \int_{\mathbb R^d} c_i^{(2)}\nabla\phi\cdot\nabla c_i .
\end{aligned}
\ee
We estimate the second term in \eqref{fullsum} using Hölder's and Young's inequality and get 
\begin{align}\label{term1bd}
\left| \sum_{i=1}^Nz_iD_i \int_{\mathbb R^d} c_i\nabla\phi^{(1)}\cdot\nabla c_i \right|
&\leq \sum_{i=1}^N\frac{D_i}{8} \|\nabla c_i\|_{L^2}^2 + C\|\nabla\phi^{(1)}\|_{L^\infty} \sum_{i=1}^N\|c_i\|_{L^2}^2.
\end{align}
Moreover, applying the elliptic regularity estimate  \eqref{prop2.1} with $p = d+\epsilon$, we can further bound $\nabla\phi^{(1)}$ in $L^{\infty}$ by
\begin{align}\label{phi1bd}
\|\nabla\phi^{(1)}\|_{L^\infty}
&\leq C\left(\sum_{j=1}^N\|c_j^{(1)}\|_{L^1} + \sum_{j=1}^N\|c_j^{(1)}\|_{L^{d+\varepsilon}} \right),
\end{align} yielding 
\begin{align}\label{term2}
\left| \sum_{i=1}^Nz_iD_i \int_{\mathbb R^d} c_i\nabla\phi^{(1)}\cdot\nabla c_i \right|
&\leq \frac{D}{8} \sum_{i=1}^N\|\nabla c_i\|_{L^2}^2 + C \left(\sum_{j=1}^N\|c_j^{(1)}\|_{L^1} + \sum_{j=1}^N\|c_j^{(1)}\|_{L^{d+\varepsilon}} \right) \sum_{i=1}^N\|c_i\|_{L^2}^2 .
\end{align}
For the third term in \eqref{fullsum}, we use Hölder's inequality with exponents
$
\frac1d, \frac{d-2}{2d}, \frac12$, together with the Hardy-Littlewood-Sobolev inequality
\be 
\|\nabla\phi\|_{L^{\frac{2d}{d-2}}} \leq C \|\rho\|_{L^2},
\ee  to estimate 
\be \label{term3}
\begin{aligned}
&\left|
\sum_{i=1}^Nz_iD_i
\int_{\mathbb R^d}
c_i^{(2)}
\nabla\phi\cdot\nabla c_i
\right|
\leq
C\sum_{i=1}^N
\|c_i^{(2)}\|_{L^d}
\|\nabla\phi\|_{L^{\frac{2d}{d-2}}}
\|\nabla c_i\|_{L^2}
\leq C\sum_{i=1}^N \|c_i^{(2)}\|_{L^d} \|\rho\|_{L^2} \|\nabla c_i\|_{L^2} 
\\&\leq\frac{D}{8} \sum_{i=1}^N \|\nabla c_i\|_{L^2}^2 + C  \|\rho\|_{L^{2}}^2 \sum_{i=1}^N \|c_i^{(2)}\|_{L^d}^2
\le \frac D8 \sum_{i=1}^N \|\nabla c_i\|_{L^2}^2 + C  \sum_{i=1}^N\|c_i\|_{L^{2}}^2 \sum_{j=1}^N \|c_j^{(2)}\|_{L^d}^2.
\end{aligned}
\ee The first term on the right-hand-side of \eqref{fullsum} is the most challenging one due to the doubly nonlinear structure of $u$. Integrating by parts, applyig H\"older's inequality with exponents $q, p, 2$ with $q > d$ and $p \in (2, \frac{2d}{d-2})$, and using the boundedness of the Leray projector on $L^p$, we have   
\be 
\begin{aligned}
&\left| \int ( u\cdot\nabla c_i^{(2)})  c_i \right|
= \left| \int c_i^{(2)} u\cdot\nabla c_i \right| \le \| c_i^{(2)}\|_{L^q} \|u\|_{L^p} \| \nabla c_i\|_{L^2} 
\\&\le C \|c_i^{(2)}\|_{L^q}\left(\|\rho\nabla\phi^{(1)}\|_{L^p} + \|\rho^{(2)}\nabla\phi\|_{L^p} \right) \|\nabla c_i\|_{L^2}\\
&\le C \| c_i^{(2)}\|_{L^q} \|\rho\|_{L^p} \|\nabla\phi^{(1)}\|_{L^\infty} \|\nabla c_i\|_{L^2}
+ C \|c_i^{(2)}\|_{L^q} \|\rho^{(2)}\nabla\phi\|_{L^p} \|\nabla c_i\|_{L^2} .
\end{aligned}
\ee We denote the first product by $\mathcal{I}_{i,1}$ and the second one by $\mathcal{I}_{i,2}$, and we bound each one of them separately. For $\mathcal{I}_{i,1}$, we interpolate and employ elliptic estimates to estimate $\mathcal{I}_{i,1}$ as follows,
\be 
\begin{aligned}
\mathcal{I}_{i,1} &= C\|c_i^{(2)}\|_{L^q} \|\rho\|_{L^p} \|\nabla\phi^{(1)}\|_{L^\infty} \|\nabla c_i\|_{L^2}  \\ 
 &\le C\| c_i^{(2)}\|_{L^q} \|\rho\|_{L^2}^{1-\theta} \|\nabla \rho\|_{L^2}^{\theta} (\|\rho^{(1)}\|_{L^1} + \|\rho^{(1)}\|_{L^{d+\epsilon}}) \|\nabla c_i\|_{L^2} 
  \\&\le C \|c_i^{(2)}\|_{L^q} \left(\sum_{j=1}^N\|c_j\|_{L^2}^2 \right)^{\frac{1-\theta}{2}} \left(\sum_{j=1}^N\|\nabla c_j\|_{L^2}^2 \right)^{\frac{\theta}{2}} \left(\|\rho^{(1)}\|_{L^1} + \|\rho^{(1)}\|_{L^{d+\epsilon}} \right) \|\nabla c_i\|_{L^2}
\end{aligned}
\ee for some $\theta \in (0,1)$.
Summing over all indices $i \in \left\{1, \dots, N \right\}$ and applying the  Cauchy-Schwarz inequality, we deduce that 
\be
\begin{aligned}
&\sum_{i=1}^N \mathcal{I}_{i,1} 
\\&\le C \left(\sum_{i=1}^N \|c_i^{(2)}\|_{L^q}^2 \right)^{1/2} \left(\sum_{i=1}^N \|\nabla c_i\|_{L^2}^2 \right)^{1/2} \left(\sum_{j=1}^N\|c_j\|_{L^2}^2 \right)^{\frac{1-\theta}{2}} \left(\sum_{j=1}^N\|\nabla c_j\|_{L^2}^2
\right)^{\frac{\theta}{2}} \left(\|\rho^{(1)}\|_{L^1} + \|\rho^{(1)}\|_{L^{d+\epsilon}}\right),
\end{aligned}
\ee which, after applying Young's inequality and taking $q = d+\epsilon$, reduces to
\begin{align}\label{term1.1}
\sum_{i=1}^N \mathcal{I}_{i,1} &\le \frac D8 \sum_{i=1}^N \|\nabla c_i\|_{L^2}^2 + C A(t)^{\frac{2}{1-\theta}} \sum_{i=1}^N \|c_i\|_{L^2}^2 .
\end{align}
where
\[
A(t)= \left(\sum_{i=1}^N \|c_i^{(2)}\|_{L^{d+\epsilon}}^2 \right)^{1/2} (\|\rho^{(1)}\|_{L^1} + \|\rho^{(1)}\|_{L^{d+\epsilon}}),
\] which is integrable in time. Now we turn our attention to the term $\mathcal{I}_{i,2}$, which can be bounded by
\begin{align}\label{term1.2}
\mathcal{I}_{i,2} \le C \|c_i^{(2)}\|_{L^{2d}} \|\rho^{(2)}\|_{L^{2d}} \|\nabla\phi\|_{L^{\frac{2d}{d-2}}} \|\nabla c_i\|_{L^2}. 
\end{align} This term depends on the spatial dimension as follows: 

\textbf{Case 1: $d=3$.} In this case, we interpolate in $L^p$ spaces and employ continuous Sobolev embeddings to estimate $\mathcal{I}_{i,2}$ by 
\be 
\begin{aligned}
\mathcal{I}_{i,2} &\le \|c_i^{(2)}\|_{L^{6}} \|\rho^{(2)}\|_{L^{6}} \|\nabla\phi\|_{L^6} \|\nabla c_i\|_{L^2}
\\&\le C \|c_i^{(2)}\|_{L^{3}}^\frac{1}{4} \|c_i^{(2)}\|_{L^{9}}^\frac{3}{4} \|\rho^{(2)}\|_{L^{3}}^\frac{1}{4} \|\rho^{(2)}\|_{L^{9}}^\frac{3}{4} \|\rho\|_{L^{2}} \|\nabla c_i\|_{L^2}  \\
&\le C \|c_i^{(2)}\|_{L^{3}}^\frac{1}{4} \|c_i^{(2)}\|_{L^{9}}^\frac{3}{4} \bigg(\sum_{j=1}^N
\|c_j^{(2)}\|_{L^3}^2\bigg)^{1/8} \bigg(\sum_{j=1}^N
\|c_j^{(2)}\|_{L^9}^2\bigg)^{3/8} \bigg(\sum_{j=1}^N
\|c_j\|_{L^2}^2\bigg)^{1/2} \|\nabla c_i\|_{L^2} \\
&\le \frac{D}{8N} \|\nabla c_i\|_{L^2}^2 + C \|c_i^{(2)}\|_{L^{3}}^\frac{1}{2} \|c_i^{(2)}\|_{L^{9}}^\frac{3}{2} \bigg(\sum_{j=1}^N \|c_j^{(2)}\|_{L^3}^2\bigg)^{1/4} \bigg(\sum_{j=1}^N \|c_j^{(2)}\|_{L^9}^2\bigg)^{3/4} \sum_{j=1}^N \|c_j\|_{L^2}^2.
\end{aligned}
\ee 
Summing and simplifying, we infer that 
\be
\begin{aligned}\label{t1case1}
\sum\limits_{i=1}^{N}  \mathcal{I}_{i,2} &\le \frac{D}{8N} \sum_{i=1}^N \|\nabla c_i\|_{L^2}^2 + C \sum_{i=1}^N \|c_i\|_{L^2}^2 \bigg(\sum_{j=1}^N \|c_j^{(2)}\|_{L^3}^2\bigg)^{1/2} \bigg(\sum_{j=1}^N \|c_j^{(2)}\|_{L^9}^2\bigg)^{3/2}.
\end{aligned}
\ee Here we exploit the additional regularity $c_i \in L^3(0, \infty; L^9(\R^3))$ from \eqref{reg1} to guarantee integrability in time of the last term.

\textbf{Case 2: $d=4$. } This case is similar to the previous one, where the additional regularity $c_i \in L^4(0, \infty; L^8(\R^4))$ can be taken advantage of. Indeed, it holds that 
\be 
\begin{aligned}
\mathcal{I}_{i,2} &\le C\|c_i^{(2)}\|_{L^{8}} \|\rho^{(2)}\|_{L^{8}} \|\nabla\phi\|_{L^4} \|\nabla c_i\|_{L^2}
\\&\le C\|c_i^{(2)}\|_{L^{8}} \|\rho^{(2)}\|_{L^{8}} \|\rho\|_{L^2} \|\nabla c_i\|_{L^2} \nonumber \\
&\le \frac{D}{8N} \|\nabla c_i\|_{L^2}^2 + C \|\rho\|_{L^2}^2 \|c_i^{(2)}\|_{L^{8}}^2 \|\rho^{(2)}\|_{L^{8}}^2,
\end{aligned}
\ee which, after summing over all indices $i \in \left\{1, \dots, N \right\}$, reduces to 
\begin{align}\label{t1case2}
\sum\limits_{i=1}^{N} \mathcal{I}_{i,2} &\le \frac D8 \sum_{i=1}^N \|\nabla c_i\|_{L^2}^2 + C \sum_{i=1}^N \|c_i\|_{L^2}^2 \bigg(\sum_{j=1}^N \|c_j^{(2)}\|_{L^{8}}^2\bigg)^2.
\end{align}
We point out that for cases 1 and 2, we only need the initial concentrations to be in  $L^{d+\epsilon} (\mathbb{R}^d)).$

\textbf{Case 3: $d\ge 5$.} 
The best norm division we can do here is 
\be 
\mathcal{I}_{i,2} \le C\|c_i^{(2)}\|_{L^{2d}} \|\rho^{(2)}\|_{L^{2d}} \|\nabla\phi\|_{L^{\frac{2d}{d-2}}} \|\nabla c_i\|_{L^2},
\ee
and for this we would need better regularity assumptions on $c_i(0)$, namely, $c_i(0) \in L^\infty(0,T,L^{2d-1}(\mathbb{R}^d)).$ In fact, using the embedding of $\dot{H}^1$ in $L^{\frac{2d}{d-2}}$, we can bound the sum of all the $\mathcal{I}_{i,2}$ terms by 
\be 
\begin{aligned}\label{t1case3}
\sum\limits_{i=1}^{N} \mathcal{I}_{i,2} \le \frac D8 \sum_{i=1}^N \|\nabla c_i\|_{L^2}^2 + C \sum_{i=1}^N\|c_i\|_{L^2}^2 \bigg(\sum_{j=1}^N \|c_j^{(2)}\|_{L^{2d}}^2 \bigg)^2 .
\end{aligned}
\ee The integrability in time of the fourth powers of the $L^{2d}$ norms of the ionic concentrations hold due to the regularity criteria $c_i \in L^{2d}(0, \infty; L^{2d}(\R^d))$ stated in \eqref{reg2}.

{\bf{Conclusion.}} In all of the above three cases, we end up with a differential inequality of the form 
\be 
\begin{aligned}
\frac{d}{dt}\sum_{i=1}^N \|c_i\|_{L^2}^2 & \le  C G(t) \sum_{i=1}^N\|c_i\|_{L^2}^2,
\end{aligned}
\ee where $G(t)$ is an integrable function of time over $[0,T]$. 
By Gronwall's inequality, we deduce that $c_i^{(1)} = c_i^{(2)}$ a.e. 
\end{proof}

\section{Concluding Remarks}\label{sec3}

We close the paper with several remarks concerning the scope of our
methods, the relation of our results to the existing literature, and
some further properties of the solutions constructed above.

\begin{rem}[Applicability to other electrodiffusion models]
\label{rem:other-models}
The proof of Theorem~\ref{main} 
exploits the dissipative structure of the Nernst--Planck subsystem
together with the fact that, in the Darcy regime, the fluid velocity
is determined instantaneously from the ionic concentrations at the
same level of regularity. This structure is shared, in whole or in
part, by a number of electrodiffusion models, and our arguments apply
to them with only minor modifications. In particular, for the
three-dimensional Nernst--Planck equations (in the absence of fluid
coupling), both the global existence of solutions for initial
concentrations that are small in $L^{d/2}(\mathbb{R}^d)$ and the
uniqueness argument carry over
verbatim. For the three-dimensional Nernst--Planck--Boussinesq (NPB)
and Nernst--Planck--Navier--Stokes (NPNS) systems, the existence part
of our analysis persists: for ionic concentrations with small initial
data in $L^{d/2}(\mathbb{R}^d)$, one obtains global solutions with the
concentrations lying in the same $L^p$-based spaces as in
Theorem~\ref{main}. The uniqueness part, however, does not
carry over to these fluid couplings, as it would require, in
particular, uniqueness at supercritical regularity levels for the
three-dimensional Navier--Stokes equations, which is not available.
This contrast highlights a genuine advantage of the Darcy coupling:
the velocity field inherits the (low) regularity of the ionic
concentrations without loss, which is precisely what makes uniqueness
in scaling-critical and nearly critical spaces tractable.
\end{rem}

\begin{rem}[Comparison with previous uniqueness results]
\label{rem:comparison}
Our uniqueness result improves upon the results available in the
literature for the three-dimensional NPD system, where uniqueness has
been established under the assumption that the initial ionic
concentrations belong to Sobolev spaces of positive regularity
\cite{ abdo2025long, abdo2026long, ignatova2022global}. In contrast,
Theorem~\ref{main} yields uniqueness for merely
Lebesgue-integrable initial data, namely
$c_i(0) \in L^{d+\epsilon}(\mathbb{R}^d)$ for $3 \le d \le 4$ and
$c_i(0) \in L^{2d-1}(\mathbb{R}^d)$ for $d \ge 5$, which lie strictly
below the regularity thresholds previously required. The improvement
rests on two ingredients. The first is a dissipative structure encoded
in the energy estimates: for positive times, the solutions
instantaneously gain smoothness beyond the regularity imposed on the
initial data, and this parabolic gain, quantified along the evolution,
provides exactly the amount of integrability needed to control the
difference of two solutions emanating from the same low-regularity
data. The second is a careful analysis of the velocity term which,
through the Darcy law and the elliptic coupling with the electric
potential, is doubly nonlinear in the ionic concentrations: the
velocity itself is quadratic in the concentrations, so that the
electromigration--advection contribution to the difference equation
produces terms in which the unknowns appear at second order. Handling
these terms requires estimates in which the gained regularity of the
two solutions is distributed asymmetrically between the factors, a
mechanism that does not appear to have been exploited previously in
the analysis of the NPD system and which is precisely what allows the
uniqueness argument to close at this level of regularity.
\end{rem}

\begin{rem}[Spatial analyticity]
\label{rem:analyticity}
Under the assumptions of Theorem~\ref{main}, the global
solutions constructed in this paper are in fact analytic in the space
variables for all positive times. This follows from the method of
Grujić and Kukavica \cite{grujic1998space}, originally developed to establish spatial
analyticity for the Navier--Stokes equations with $L^p$ initial data,
which is well suited to the mild-solution framework adopted here: the
quadratic and drift nonlinearities of the NPD system obey product
estimates in the analytic norms of of the
same type as the Navier--Stokes nonlinearity, and the Darcy velocity
is controlled by the concentrations through operators of
Calder\'on--Zygmund type, which act boundedly on the relevant analytic
classes. 
\end{rem}

\begin{rem}[Criticality of the smallness assumption]
\label{rem:criticality}
The smallness assumption in Theorem~\ref{main} is imposed in
the space $L^{d/2}(\mathbb{R}^d)$, which is critical with respect to
the natural scaling of the NPD system.
The criticality of $L^{d/2}(\mathbb{R}^d)$ can be seen as follows. If
$(c_1,\dots,c_N,\phi,u)$ is a solution of the system, consider, for
$\lambda>0$, the rescaled functions
\be\label{eq:rescaled}
c_i^{\lambda}(x,t)=\lambda^{2}c_i(\lambda x,\lambda^{2}t),\qquad
\phi^{\lambda}(x,t)=\phi(\lambda x,\lambda^{2}t),\qquad
u^{\lambda}(x,t)=\lambda\,u(\lambda x,\lambda^{2}t).
\ee
Writing $y=\lambda x$, $s=\lambda^{2}t$ and
$\rho^{\lambda}=\sum_j z_j c_j^{\lambda}=\lambda^{2}\rho(y,s)$, the
chain rule gives
\be
-\Delta_x\phi^{\lambda}=-\lambda^{2}(\Delta_y\phi)(y,s)
=\lambda^{2}\rho(y,s)=\rho^{\lambda},
\ee so the Poisson equation is
preserved, while
\be 
\partial_t c_i^{\lambda}=\lambda^{4}(\partial_s c_i)(y,s),
\Delta_x c_i^{\lambda}=\lambda^{4}(\Delta_y c_i)(y,s),
\mathrm{div}_x(c_i^{\lambda}\nabla_x\phi^{\lambda})
=\lambda^{4}\big(\mathrm{div}_y(c_i\nabla_y\phi)\big)(y,s)
\ee and 
\be 
u^{\lambda}\cdot\nabla_x c_i^{\lambda}
=\lambda^{4}(u\cdot\nabla_y c_i)(y,s),
\ee so that every term of the
Nernst--Planck equations picks up the same factor $\lambda^{4}$ and
$(c_1^{\lambda},\dots,c_N^{\lambda},\phi^{\lambda},u^{\lambda})$ is
again a solution, with initial data
$\lambda^{2}c_i^{0}(\lambda\,\cdot)$. The exponents in
\eqref{eq:rescaled} are the only possible ones: matching the time
derivative with the diffusion imposes the parabolic scaling of time,
matching the quadratic electromigration term
$\mathrm{div}(c_i\nabla\phi)$ with the diffusion, in combination with
the Poisson equation, forces the exponent $2$ on the concentrations
and the invariance of $\phi$. Since a change of variables yields
\be\label{eq:norm-scaling}
\|c_i^{\lambda}(0)\|_{L^{p}(\mathbb{R}^d)}
=\lambda^{2-\frac{d}{p}}\|c_i(0)\|_{L^{p}(\mathbb{R}^d)},
\ee
the exponent $2-d/p$ vanishes if and only if $p=d/2$, so that
$L^{d/2}(\mathbb{R}^d)$ is the unique Lebesgue space whose norm is
invariant under \eqref{eq:rescaled}: spaces with $p>d/2$ are
subcritical and spaces with $p<d/2$ are supercritical. In particular,
the smallness condition of Theorem~\ref{main} constrains only
the size of the initial data and not their spatial scale.
 Whether the smallness
condition can be removed for large critical data, in dimensions
$d \ge 3$ and for arbitrary numbers of ionic species with different
diffusivities and valences, remains an interesting open problem.
\end{rem}

\vspace{0.5cm}

{\bf{Acknowledgments.}} E.A. and C.G. were partially supported by the University Research Board (URB) of the American University of Beirut under Grant No. 104752. 

\vspace{0.5cm}

{\bf{Data Availability Statement.}} The research does not have any associated data.

\vspace{0.5cm}

{\bf{Conflict of Interest.}} The authors declare that they have no conflict of interest.

\bibliographystyle{plain}
\bibliography{references}

\end{document}